\newcommand\bbbr{\mathbb{R}}
\newcommand\cN{\mathcal{N}}
\newcommand\cX{\mathcal{X}}
\newcommand\myEquals{\hspace{0.9mm}=\hspace{1.1mm}}
\newcommand\myMedSep{\hspace{2mm}}
\newcommand\myBigSep{\hspace{20mm}}
\newcommand{\dfem}[1]{{\bf #1}}
\newcommand{\ee}{\varepsilon}
\let\originalleft\left
\let\originalright\right
\renewcommand{\left}{\mathopen{}\mathclose\bgroup\originalleft}
\renewcommand{\right}{\aftergroup\egroup\originalright}
\newtheorem{theorem}{Theorem}[section]
\newtheorem{lemma}[theorem]{Lemma}
\theoremstyle{definition}
\newtheorem{definition}{Definition}[section]
\begin{document}
\title{How to compute multi-dimensional stable and unstable manifolds of piecewise-linear maps.}
\author{
D.J.W.~Simpson\\\\
School of Mathematical and Computational Sciences\\
Massey University\\
Palmerston North, 4410\\
New Zealand
}
\maketitle

\begin{abstract}

For piecewise-linear maps the stable and unstable manifolds of hyperbolic periodic solutions are themselves piecewise-linear.
Hence compact subsets of these manifolds can be represented using polytopes (i.e.~polygons, in the case of two-dimensional manifolds).
Such representations are efficient and exact
so for computational purposes are superior to representations
that use a large number of points on some mesh (as is usually done in the smooth setting).
We introduce a method for computing convex polytope representations of stable and unstable manifolds.
For an unstable manifold we iterate a suitably small subset of the local unstable manifold
and prior to each iteration subdivide polytopes where they intersect the switching manifold of the map.
We prove the output converges to the (entire) unstable manifold
and use it to visualise attractors and bifurcations of the three-dimensional border-collision normal form:
we identify a heterodimensional-cycle,
a two-dimensional unstable manifold whose closure appears to be a unique attractor,
and a piecewise-linear analogue of a first homoclinic tangency where an attractor appears to be destroyed.

\end{abstract}

\section{Introduction}
\label{sec:intro}

For nonlinear dynamical systems
it is often extremely helpful to understand the global behaviour of
the stable and unstable manifolds of saddle-type invariant sets.
This is because stable manifolds typically form boundaries for basins of attraction
and under parameter variation chaos can be generated and attractors can be destroyed
when stable and unstable manifolds first intersect \cite{GrOt83,PaTa93}.

One-dimensional manifolds are relatively easy to compute.
To compute a one-dimensional unstable manifold of an equilibrium $x^*$ of a system of ordinary differential equations,
one just needs to evolve two points $x^* \pm \ee v$, where $v$ is the associated
unstable eigenvector and $\ee > 0$ is suitably small.
For a map (system of difference equations)
one iterates a large number of points distributed across a fundamental domain \cite{Ku04}.
Computations of two-dimensional manifolds require more effort
and many methods have been developed for doing this \cite{KrOs05}.
However, these are designed for smooth dynamical systems for which the manifolds are curved surfaces.
To represent these computationally it is necessary
to use a large number of points on some two-dimensional mesh.

Here we present a new method for computing multi-dimensional stable and unstable manifolds of piecewise-linear maps.
For simplicity we only treat maps that are continuous and have a single switching manifold,
but the same approach should be effective for discontinuous maps and maps with several switching manifolds.
The method works by repeatedly iterating a suitable initial local approximation $U$,
and a typical output is shown in Fig.~\ref{fig:twoDimInvManExA_many}.
At each step the method generates a compact subset of the manifold
and represents it as a union of convex polytopes.
This approach is based on the fact that the image under the map
of a convex polytope is either another convex polytope
or a union of two convex polytopes.
Computationally each polytope is represented by the convex hull of its vertices.
In this way relatively large subsets of the manifold can be characterised using relatively few points.

\begin{figure}[b!]
\begin{center}
\includegraphics[width=12.2cm]{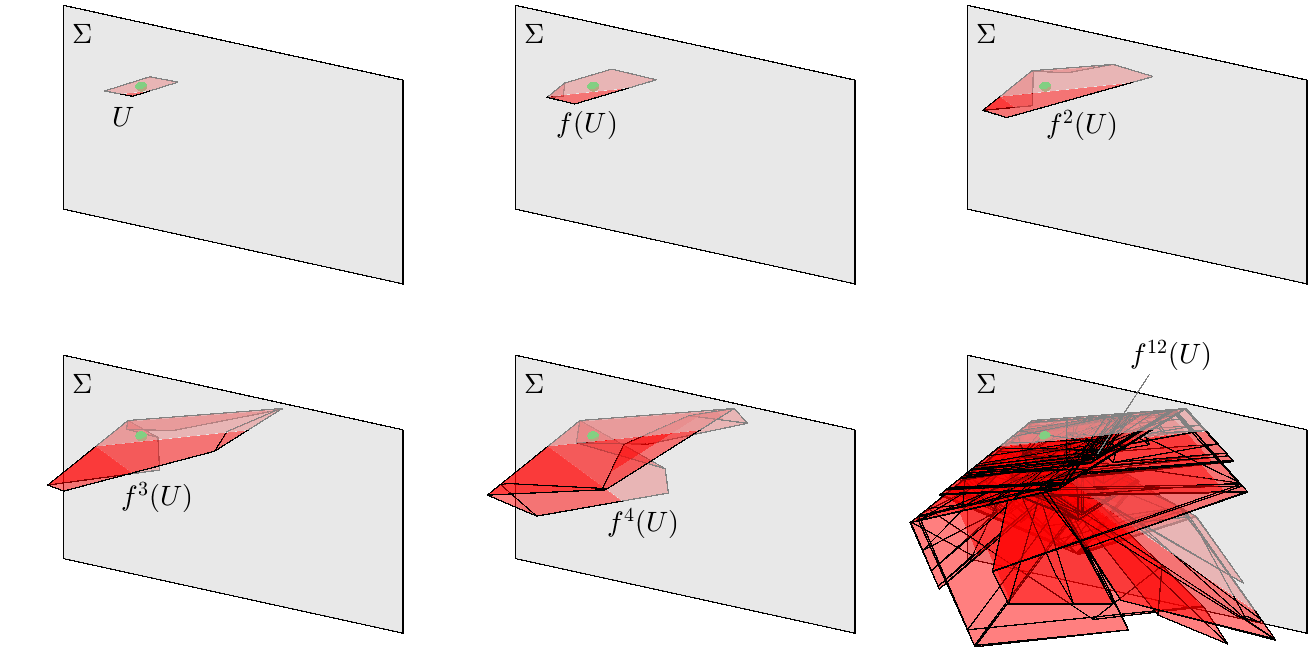}
\caption{
A numerically computed unstable manifold of a saddle fixed point (green dot) with two unstable directions.
This figure is for the three-dimensional border-collision normal form \eqref{eq:f} with \eqref{eq:nf}
using the parameter values \eqref{eq:A}.
Each plot shows the switching manifold $\Sigma$, where the map is continuous but non-differentiable.
The computation starts with a quadrilateral $U$ (a subset of the local unstable manifold),
and after $12$ iterations the computed manifold consists of $533$ polygons (bottom right).
\label{fig:twoDimInvManExA_many}
} 
\end{center}
\end{figure}

Below we prove that the computation limits to the entire manifold as the number of steps tends to infinity
and use the method to compute various two-dimensional manifolds.
Higher dimensional manifolds are not attempted here
as they are significantly more difficult to visualise,
plus require algorithms in computational geometry
to encode the polytopes and split them at the switching manifold \cite{PrSh85}.
Such algorithms are not needed for two-dimensional manifolds
because the vertices of polygons admit a natural cyclical ordering.

This work is motivated by a need to
better understand how dynamics changes at border-collision bifurcations
where a fixed point of a piecewise-smooth map collides with a switching manifold
and the local dynamics is captured by a piecewise-linear approximation \cite{Si16alt}.
Border-collision bifurcations have been heavily studied
for one-dimensional maps \cite{NuYo95,SuAv16}
and two-dimensional maps \cite{BaGr99,NuYo92}
where stable and unstable manifolds of saddles are at most one-dimensional.
Recent studies have revealed novel dynamics in three dimensions \cite{MuBa23}.
To understand these further detailed computations of two-dimensional manifolds should prove useful.
Since the method computes the manifolds extremely accurately (with linearity no approximations are needed),
their computations (even of manifolds that are more than two dimensional)
could be used in computer-assisted proofs \cite{GlSi22b}.

We start in \S\ref{sec:sym} by clarifying the class of maps under consideration
and how periodic solutions can be encoded symbolically.
In \S\ref{sec:polytopes} we review elementary properties of convex polytopes that underpin the computations.
In \S\ref{sec:algorithm} we explain the method in more detail and prove that
the computed subsets converge to the (full) manifold in the limit of infinitely many iterations.
In \S\ref{sec:examples} we discuss some details for how the method can be implemented for two-dimensional manifolds
and illustrate this with the three-dimensional border-collision normal form.
Finally \S\ref{sec:conc} provides a brief discussion.

\section{Periodic solutions and symbolic itineraries}
\label{sec:sym}

Here we discuss the basic aspects of periodic solutions of piecewise-linear maps
that will be needed below.
Further details on this topic can be found in \cite{Si16alt}.

We consider maps on $\bbbr^n$ of the form
\begin{equation}
f(x) = \left\{ \begin{array}{l@{~~}l}
A_L x + b, & c^{\sf T} x \le 0, \\
A_R x + b, & c^{\sf T} x \ge 0,
\end{array} \right.
\label{eq:f}
\end{equation}
where $A_L$ and $A_R$ are $n \times n$ matrices and $b, c \in \bbbr^n$.
The map is assumed to be continuous on the switching manifold $c^{\sf T} x = 0$,
thus $A_L$ and $A_R$ differ by a rank-one matrix, specifically $A_R = A_L + a c^{\sf T}$
for some $a \in \bbbr^n$.
We assume $c$ is not the zero vector so that the switching manifold, call it $\Sigma$,
is a codimension-one manifold (in fact a hyperplane).
If $\det(A_L) \det(A_R) > 0$ the map is invertible,
meaning every $x \in \bbbr^n$ has a unique preimage $f^{-1}(x)$.

To describe orbits symbolically we use the following definition.
For any $x \in \bbbr^n$ not belonging to $\Sigma$, define
$$
\sigma(x) = \left\{ \begin{array}{l@{~~}l} L, & c^{\sf T} x < 0, \\ R, & c^{\sf T} x > 0. \end{array} \right.
$$
In this paper we will not need to assign a symbol to points on $\Sigma$.

Now suppose \eqref{eq:f} has a period-$p$ solution $\gamma$.
For any $y \in \gamma$ we can express $\gamma$ as the ordered set
$\{ y, f(y), \ldots, f^{p-1}(y) \}$.
These points are distinct and $f^p(y) = y$.
Assuming $\gamma$ has no points on $\Sigma$, we can use $y$ to define the word
\begin{equation}
\cX = \sigma(y) \sigma(f(y)) \ldots \sigma(f^{p-1}(y)).
\label{eq:cX}
\end{equation}
That is, $\cX = \cX_0 \cX_1 \cdots \cX_{p-1}$
where $\cX_i = \sigma \left( f^i(y) \right)$ for each $i = 0,1,\ldots,p-1$.
We then refer to $\gamma$ as an $\cX$-cycle.
Different points in $\gamma$ generate different words,
but these words will all be cyclic permutations of one another.
Thus any period-$p$ solution with no points on $\Sigma$
is an $\cX$-cycle for a word $\cX$ of length $p$ that is unique up to cyclic permutation.

Stable and unstable manifolds of periodic solutions are defined as follows.

\begin{definition}
The \dfem{stable manifold} of $\gamma$ is
$$
W^s(\gamma) = \left\{ x \in \bbbr^n \,\big|\, f^i(x) \to \gamma {\rm ~as~} i \to \infty \right\},
$$
and the \dfem{unstable manifold} of $\gamma$ is
$$
W^u(\gamma) = \left\{ x \in \bbbr^n \,\big|\, x {\rm ~has~a~sequence~of~preimages~converging~to~} \gamma \right\}.
$$
\label{df:WsWu}
\end{definition}

Continuing to assume $\gamma$ has no points on $\Sigma$,
each point of $\gamma$ has a neighbourhood in which $f^p$ is smooth.
Thus we can use classical dynamical systems theory
to help us understand the nature of its stable and unstable manifolds.

In fact in this neighbourhood $f^p$ is affine and can be expressed explicitly as follows.
By composing the pieces of $f$ in the order specified by the word $\cX$,
in a neighbourhood of $y$,
\begin{equation}
f^p(x) = M_\cX x + P_\cX b,
\label{eq:fp}
\end{equation}
where
\begin{eqnarray}
M_\cX &=& A_{\cX_{p-1}} \cdots A_{\cX_1} A_{\cX_0} \,, \label{eq:MX} \\
P_\cX &=& I + A_{\cX_{p-1}} + A_{\cX_{p-1}} A_{\cX_{p-2}} + \cdots + A_{\cX_{p-1}} A_{\cX_{p-2}} \cdots A_{\cX_1} \,. \label{eq:PX}
\end{eqnarray}
In this neighbourhood ${\rm D} f^p(x) = M_\cX$,
thus the stability multipliers of $\gamma$ are the eigenvalues of $M_{\cX}$.
Notice the same eigenvalues result using any other point in $\gamma$ 
because these eigenvalues are independent of the cyclical ordering of the $p$ matrices in \eqref{eq:MX}.

If none of the eigenvalues has unit modulus, as is generically the case, then $\gamma$ is hyperbolic.
In this case we can apply the local stable manifold theorem \cite{Ro99}.
Any point in $\gamma$, say $y$, is a fixed point of $f^p$,
and its stable and unstable subspaces are the invariant subspaces of $x \mapsto M_\cX x$
that are aligned with all stable and unstable directions, respectively.
The local stable manifold theorem ensures
$y$ has local stable and unstable manifolds of the same dimensions as the corresponding subspaces
and tangent to these subspaces at $y$.
Further, $W^u(\gamma)$ can be constructed by iterating the local unstable manifold under $f$,
and if $f$ is invertible $W^s(\gamma)$ can be constructed by iterating the local stable manifold under $f^{-1}$.

But in our piecewise-linear setting
the local stable and unstable manifolds {\em coincide} with the appropriately translated stable and unstable subspaces.
For this reason the small set $U$ that we will use to initialise our computation
is not an approximation to a small part of the stable or unstable manifold,
it {\em is} part of the manifold.

\section{Fundamentals of convex polytopes}
\label{sec:polytopes}

Here we clarify the definition of a convex polytope
and the concepts of dimension and relative interior that will be needed below.
For a more gradual and detailed introduction to these topics refer to
the first three sections of Br{\o}ndsted \cite{Br83}.

A set $C \subset \bbbr^n$ is {\em convex}
if $(1-s) x + s y \in C$ for all $x, y \in C$ and $0 \le s \le 1$.
The {\em convex hull} of a set $S \subset \bbbr^n$ is
the smallest convex set containing $S$.

\begin{definition}
A set $P \subset \bbbr^n$ is a \dfem{convex polytope}
if it is the convex hull of a non-empty finite set.
\label{df:convexPolytope}
\end{definition}

Fig.~\ref{fig:twoDimInvManConvexPolytope} shows an convex polytope in $\bbbr^3$.
Here $P$ is the convex hull of four points (its vertices).
These points lie on a plane, so $P$ is two-dimensional.
Formally the dimension of a convex polytope
can be defined as follows.

\begin{figure}[b!]
\begin{center}
\includegraphics[width=7.2cm]{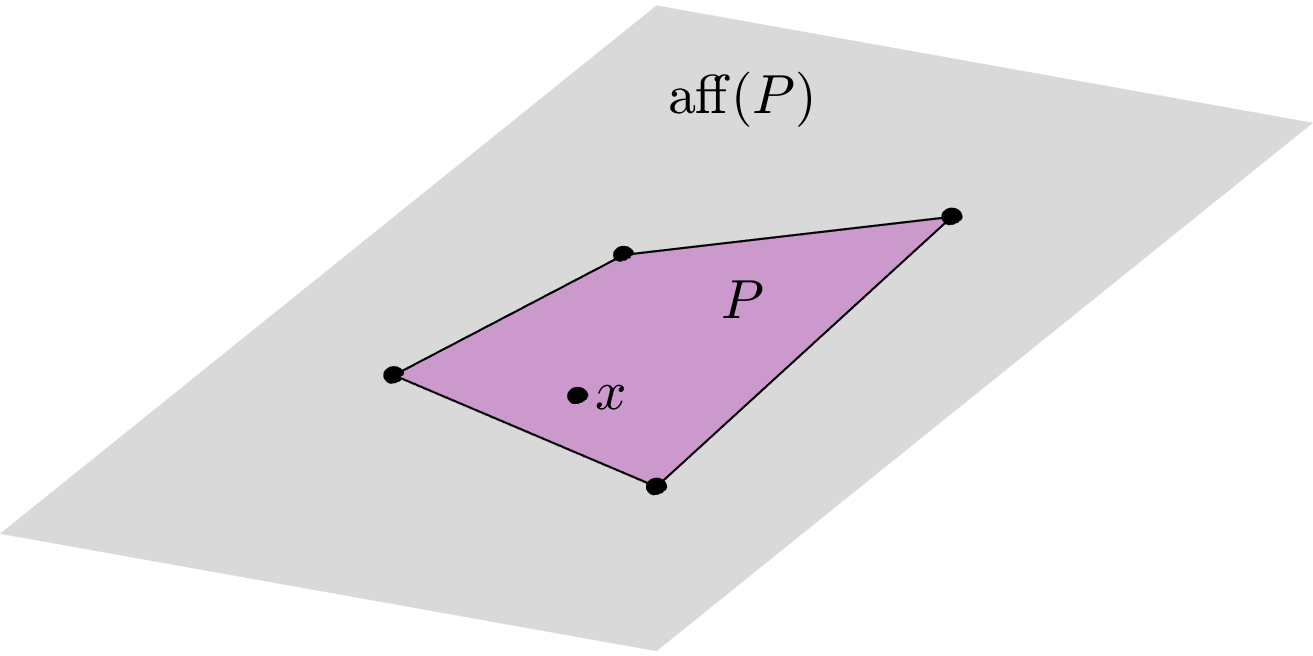}
\caption{
A sketch of a convex polygon $P \subset \bbbr^3$, its affine hull ${\rm aff}(P)$,
and a point $x$ that belongs to the relative interior of $P$.
\label{fig:twoDimInvManConvexPolytope}
} 
\end{center}
\end{figure}

A set $V \subset \bbbr^n$ is
an {\em affine subspace} it is a translate of a (linear) subspace.
The {\em affine hull} of a set $S \subset \bbbr^n$ is
the smallest affine subspace containing $S$.

\begin{definition}
The \dfem{dimension} of a convex polytope is the dimension of its affine hull
(which is the dimension of its linear translate).
\label{df:dimension}
\end{definition}

Below we use the concept of relative interior instead of interior
because if the dimension of a convex polytope $P \subset \bbbr^n$
is less than $n$, then its interior (with respect to $\bbbr^n$) is the empty set.

\begin{definition}
The \dfem{relative interior} of a convex polytope
is its interior taken with respect to its affine hull.
\label{df:relativeInterior}
\end{definition}

In the case of polygons (two-dimensional polytopes),
a point in the polygon belongs to its relative interior
if and only if it is not a vertex or lies on an edge of the polygon,
see again Fig.~\ref{fig:twoDimInvManConvexPolytope}.

The following result is readily proved by direct means \cite{Br83}.

\begin{lemma}
Let $P$ be the convex hull of $x_1,\ldots,x_K \in \bbbr^n$
and let $g : \bbbr^n \to \bbbr^n$ be an affine map.
Then $g(P)$ is the convex hull of $g(x_1),\ldots,g(x_K)$.
\label{le:imageOfPolytope}
\end{lemma}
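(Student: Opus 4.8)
The plan is to prove the two inclusions $g(P) \subseteq \mathrm{conv}\{g(x_1),\ldots,g(x_K)\}$ and $g(P) \supseteq \mathrm{conv}\{g(x_1),\ldots,g(x_K)\}$ by exploiting the fact that an affine map commutes with the formation of convex combinations. The single algebraic fact underpinning everything is that for an affine map $g(x) = Tx + d$ (with $T$ linear) and any convex combination, $g\bigl(\sum_i s_i x_i\bigr) = \sum_i s_i\, g(x_i)$ whenever $s_i \ge 0$ and $\sum_i s_i = 1$; here the normalisation $\sum_i s_i = 1$ is exactly what absorbs the constant term $d$. I would establish this identity first as the key lemma, since it is the only place the affine (as opposed to merely linear) structure enters.

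For the forward inclusion, I would take an arbitrary point $z \in g(P)$, write $z = g(x)$ for some $x \in P$, and use Definition~\ref{df:convexPolytope} together with the standard characterisation that the convex hull of a finite set consists precisely of all its convex combinations: thus $x = \sum_{i=1}^K s_i x_i$ for some weights $s_i \ge 0$ summing to $1$. Applying the commutation identity gives $z = g(x) = \sum_{i=1}^K s_i\, g(x_i)$, which exhibits $z$ as a convex combination of $g(x_1),\ldots,g(x_K)$ and hence places it in their convex hull. For the reverse inclusion I would run the same computation backwards: any convex combination $\sum_i s_i\, g(x_i)$ equals $g\bigl(\sum_i s_i x_i\bigr)$, and the point $\sum_i s_i x_i$ lies in $P$ because $P$ is convex and contains each $x_i$, so the combination lies in $g(P)$.

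The main obstacle, such as it is, is not conceptual but a matter of having the right preliminary fact in hand: namely that the convex hull of a finite set equals the set of all convex combinations of its elements. The excerpt defines the convex hull only as the smallest convex set containing $S$, so strictly speaking one must first verify that this abstract definition coincides with the explicit description via convex combinations. I would either cite this from Br{\o}ndsted \cite{Br83} or dispatch it quickly: the set of all convex combinations is clearly convex and contains $S$, so it contains the convex hull, while conversely any convex set containing $S$ is closed under taking convex combinations of points of $S$ (an easy induction on the number of terms), so it contains that set. Once this characterisation is available, the proof of the lemma is a two-line symmetric argument, and indeed the excerpt's remark that the result is ``readily proved by direct means'' reflects exactly this: the affine image of a convex combination is the corresponding convex combination of the images, and the two inclusions follow immediately.
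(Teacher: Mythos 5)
Your proof is correct and complete: the key identity that an affine map commutes with convex combinations (the normalisation $\sum_i s_i = 1$ absorbing the constant term), plus the characterisation of the convex hull of a finite set as the set of all convex combinations, gives both inclusions immediately. The paper does not spell out a proof at all --- it merely notes the result is ``readily proved by direct means'' and cites Br{\o}ndsted --- and your argument is precisely the standard direct proof being alluded to.
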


Next we consider polytopes in the phase space of a piecewise-linear map \eqref{eq:f}.
Recall, $\Sigma$ denotes the switching manifold $c^{\sf T} x = 0$.	

\begin{definition}
A connected set $S \subset \bbbr^n$ \dfem{crosses} $\Sigma$
if there exist $x_1, x_2 \in S$ with $c^{\sf T} x_1 < 0$ and $c^{\sf T} x_2 > 0$.
\label{df:crosses}
\end{definition}

If a convex polytope crosses $\Sigma$,
then $\Sigma$ divides the polytope into two sets, Fig.~\ref{fig:twoDimInvManIntersection}.
These sets are themselves convex polytopes.
This is a trivial consequence of the exterior representation of a convex polytope:
any convex polytope can be expressed as the convex hull of its vertices (the interior representation)
or as the intersection of a finite set of half-spaces (the exterior representation).
Thus $\Sigma$ merely introduces an additional half-space; also the dimension is unchanged:

\begin{lemma}
A $d$-dimensional convex polytope that crosses $\Sigma$
is the union of two $d$-dimensional convex polytopes that do not cross $\Sigma$.
\label{le:union}
\end{lemma}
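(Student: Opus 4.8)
The plan is to split $P$ along $\Sigma$ by intersecting it with the two closed half-spaces it bounds. Writing $H_L = \{ x \in \bbbr^n \,|\, c^{\sf T} x \le 0 \}$ and $H_R = \{ x \in \bbbr^n \,|\, c^{\sf T} x \ge 0 \}$, I set $P_L = P \cap H_L$ and $P_R = P \cap H_R$ and aim to show these are the two polytopes claimed. The essential tool is the exterior representation of $P$ as a bounded intersection of finitely many closed half-spaces. Adjoining $H_L$ to that finite list exhibits $P_L$ as a bounded intersection of finitely many half-spaces, which is again a convex polytope in the sense of Definition~\ref{df:convexPolytope}; the same argument applied with $H_R$ handles $P_R$.

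Two of the required conclusions are then immediate. First, $P = P_L \cup P_R$, because every $x$ satisfies $c^{\sf T} x \le 0$ or $c^{\sf T} x \ge 0$. Second, neither piece crosses $\Sigma$: by construction $P_L \subseteq H_L$ contains no point with $c^{\sf T} x > 0$ and $P_R \subseteq H_R$ contains no point with $c^{\sf T} x < 0$, so the condition of Definition~\ref{df:crosses} fails for each.

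The one substantive point is that $P_L$ and $P_R$ are still $d$-dimensional, i.e.\ each inherits the full affine hull ${\rm aff}(P)$. For this I would produce, for each side, a relative interior point of $P$ lying strictly off $\Sigma$. Since $P$ crosses $\Sigma$ there are $x_1, x_2 \in P$ with $c^{\sf T} x_1 < 0$ and $c^{\sf T} x_2 > 0$. Fixing any relative interior point $z$ of $P$ and moving along the segment $(1-s) z + s x_1$, the standard convexity fact that all such points with $s \in [0,1)$ lie in the relative interior of $P$, together with continuity of $x \mapsto c^{\sf T} x$, yields a relative interior point $w_L$ of $P$ with $c^{\sf T} w_L < 0$ for $s$ close enough to $1$. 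A small ball about $w_L$ within ${\rm aff}(P)$ stays both in $P$ (relative interior) and in $H_L$ (an open condition, as $c^{\sf T} w_L < 0$), hence in $P_L$; this ball is $d$-dimensional, so $\dim P_L = d$. The symmetric argument using $x_2$ gives $\dim P_R = d$.

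I expect this dimension claim to be the only delicate step, since it is precisely where the hypothesis that $P$ \emph{crosses} $\Sigma$ (rather than merely touching it, which could shave off a lower-dimensional slice) is genuinely used; the reduction to the exterior representation and the set-theoretic identities are routine.
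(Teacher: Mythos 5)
Your proof is correct and takes essentially the same route as the paper: the paper likewise splits the polytope by adjoining to its exterior representation the two closed half-spaces bounded by $\Sigma$, and simply asserts that the dimension is unchanged. Your relative-interior argument fills in that one detail, which the paper leaves implicit.
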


\begin{figure}[h!]
\begin{center}
\includegraphics[width=4.8cm]{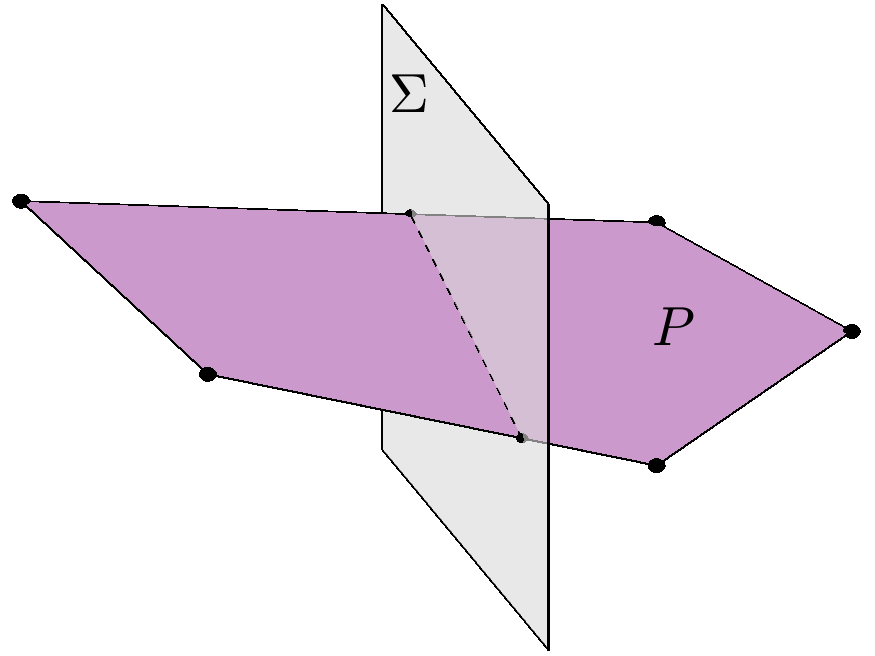}
\caption{
A sketch of the intersection of a convex polygon $P \subset \bbbr^3$
with the switching manifold $\Sigma$.
\label{fig:twoDimInvManIntersection}
} 
\end{center}
\end{figure}

\section{Computing unstable manifolds}
\label{sec:algorithm}

Here we explain our method for computing multi-dimensional unstable manifolds of a piecewise-linear map $f$ \eqref{eq:f}.
If the map is invertible then stable manifolds can be computed in the same way
by using $f^{-1}$ in place of $f$.

Let $\gamma$ be a period-$p$ solution with no points on $\Sigma$.
Assume $\gamma$ is hyperbolic, let $y \in \gamma$, and let $\cX$ be the word \eqref{eq:cX}.
Let $H$ be the affine subspace that contains $y$,
has the same dimension as the unstable subspace of $M_\cX$, and is tangent to this subspace.
As discussed in \S\ref{sec:sym}, the branch of $W^u(\gamma)$
that emanates from $y$ does so coincident with $H$.

Let $d$ be the dimension of $H$ (this is the unstable index of $\gamma$), and assume $1 \le d \le n-1$.
Let $U \subset H$ be a $d$-dimensional convex polytope that contains $y$ in its relative interior
and obeys the following {\em admissibility} condition:
for every vertex $v$ of $U$
there exists a sequence of preimages $\left\{ f^{-i}(v) \right\}_{i=1}^\infty$ satisfying
\begin{equation}
\sigma \left( f^{-i}(v) \right) = \cX_{-i \,{\rm mod}\, p} \,, \qquad {\rm for~all~} i \ge 1.
\label{eq:backwardsAdmissibility}
\end{equation}
This condition ensures that $U$ is not too big.
It says that each vertex of $U$ has preimages repeatedly the following the symbols in $\cX$ (in reverse order).
Since $H$ contains only unstable directions, these preimages converge to $\gamma$. 
The convexity of $U$ then gives the following result (proved in Appendix \ref{app:app}).

\begin{lemma}
With the above assumptions, every point in $U$ belongs to $W^u(\gamma)$.
\label{le:U}
\end{lemma}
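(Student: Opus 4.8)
The plan is to follow the backward orbit of an arbitrary point of $U$ and show it obeys the same symbolic itinerary as the vertices of $U$, after which convergence to $\gamma$ will follow from hyperbolicity. First I would encode the backward dynamics as an affine map. Each branch $x \mapsto A_S x + b$ of $f$ (for $S \in \{L,R\}$) has an affine inverse $\phi_S$, and composing these in the order obtained by reading $\cX$ backwards defines the affine map $G_i = \phi_{\cX_{-i \,{\rm mod}\, p}} \circ \cdots \circ \phi_{\cX_{-1 \,{\rm mod}\, p}}$. The admissibility condition \eqref{eq:backwardsAdmissibility} says precisely that for every vertex $v$ of $U$ the genuine preimages are produced by this composition, $f^{-i}(v) = G_i(v)$, and that each $f^{-i}(v)$ lies \emph{strictly} on the side of $\Sigma$ specified by $\cX_{-i \,{\rm mod}\, p}$.

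Next I would pass from the vertices to an arbitrary $x \in U$. Writing $x = \sum_k \lambda_k v_k$ as a convex combination of the vertices $v_1, \ldots, v_K$ of $U$, affineness of $G_i$ gives $G_i(x) = \sum_k \lambda_k f^{-i}(v_k)$; by Lemma~\ref{le:imageOfPolytope} this places $G_i(x)$ in the convex hull of the vertex preimages, which is the image polytope $G_i(U)$. The crux is to verify that $G_i(x)$ is genuinely the $i$-th preimage $f^{-i}(x)$, i.e.\ that the backward orbit of $x$ never lands on the wrong side of $\Sigma$. I would prove this by induction on $i$: since the vertex preimages all satisfy $c^{\sf T} f^{-i}(v_k) < 0$ (or all satisfy $> 0$), the identity $c^{\sf T} G_i(x) = \sum_k \lambda_k \, c^{\sf T} f^{-i}(v_k)$ expresses $c^{\sf T} G_i(x)$ as a convex combination of numbers of one fixed strict sign, and hence it has that same strict sign. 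Thus $\sigma(G_i(x)) = \cX_{-i \,{\rm mod}\, p}$, so applying the corresponding forward branch to $G_i(x)$ returns $G_{i-1}(x) = f^{-(i-1)}(x)$, giving $f^{-i}(x) = G_i(x)$. Propagating the itinerary from the vertices to every point of $U$ using only the convexity of the half-spaces bounded by $\Sigma$ is exactly where the convexity of $U$ is essential, and it is the step I expect to require the most care.

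Finally I would establish convergence to $\gamma$. Since $U \subset H$ and $H$ is the translate $y + E^u$ of the unstable subspace $E^u$ of $M_\cX$, every $x \in U$ has $x - y \in E^u$. By \eqref{eq:fp} the map $f^{-p}$ restricted to $H$ fixes $y$ and acts linearly on $E^u$ through $(M_\cX|_{E^u})^{-1}$, so $G_p^k(x) - y = (M_\cX|_{E^u})^{-k}(x - y)$. Because the eigenvalues of $M_\cX|_{E^u}$ all have modulus exceeding one, $(M_\cX|_{E^u})^{-k} \to 0$ and hence $G_p^k(x) \to y$. Writing $i = kp + r$ with $0 \le r < p$ and using $G_{kp+r} = G_r \circ G_p^k$ together with continuity of $G_r$, each of the $p$ residue subsequences satisfies $f^{-(kp+r)}(x) = G_r(G_p^k(x)) \to f^{-r}(y) \in \gamma$. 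Hence the constructed sequence of preimages of $x$ converges to $\gamma$, so $x \in W^u(\gamma)$ by Definition~\ref{df:WsWu}, as required.
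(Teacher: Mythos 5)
Your proposal is correct and follows essentially the same route as the paper's proof in Appendix~\ref{app:app}: write $x$ as a convex combination of the vertices, use convexity to show that each candidate preimage $\sum_k \lambda_k f^{-i}(v_k)$ has $c^{\sf T}$-value of the same strict sign as the vertex preimages and hence is a genuine preimage of $x$ following the itinerary $\cX$ backwards, then conclude convergence to $\gamma$. The only cosmetic differences are that you package the backward steps as explicit branch inverses $\phi_S$ (which tacitly assumes $A_L$ and $A_R$ are invertible, an assumption the paper's version avoids by working directly with the given vertex preimage sequences and defining $x_i = \sum_k \lambda_k v^{(k)}_i$), and that you re-derive the convergence to $\gamma$ from the spectrum of $M_\cX$ restricted to the unstable subspace rather than citing the already-noted convergence of the vertex preimages.
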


Now let $Q^{(0)} = P^{(0)}_1 = U$, and for all $i \ge 1$ define
$$
Q^{(i)} = f \big( Q^{(i-1)} \big).
$$
Computationally we represent each $Q^{(i)}$ as a union of convex polytopes:
\begin{equation}
Q^{(i)} = \bigcup_{j=1}^{m_i} P^{(i)}_j.
\label{eq:Qi}
\end{equation}
To do this we use the images of the polygons in $Q^{(i-1)}$ to form $Q^{(i)}$:
by Lemmas \ref{le:imageOfPolytope} and \ref{le:union}
the image under $f$ of any convex polytope
is either another convex polytope or the union of two convex polytopes.

Observe $Q^{(i)}$ belongs to the branch of $W^u(\gamma)$ that contains
$f^{i \,{\rm mod}\, p}(y) \in \gamma$.
Thus
$$
Z^{(r)} = Q^{(r)} \cup Q^{(r+1)} \cup Q^{(r+p-1)},
$$
where $r \ge 0$, includes exactly one part of each branch of $W^u(\gamma)$.
The following result justifies using
$Z^{(r)}$ with a large value of $r$ to approximate $W^u(\gamma)$.

\begin{theorem}
For any $x \in \bbbr^n$ the following are equivalent:
\begin{enumerate}
\item
$x \in W^u(\gamma)$;
\item
there exists $N \ge 0$ such that $x \in Z^{(r)}$ for all $r \ge N$.
\end{enumerate}
\label{th:main}
\end{theorem}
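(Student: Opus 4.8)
The plan is to prove the two implications separately; (ii)$\Rightarrow$(i) is essentially bookkeeping, while (i)$\Rightarrow$(ii) carries the real content. Throughout I will use that $Q^{(i)} = f^i(U)$, which follows by induction from $Q^{(0)} = U$ and $Q^{(i)} = f(Q^{(i-1)})$. For (ii)$\Rightarrow$(i) I would first observe that $W^u(\gamma)$ is forward invariant: if $x$ has a sequence of preimages converging to $\gamma$ then so does $f(x)$, obtained by prepending $x$ to that sequence. Combined with Lemma~\ref{le:U}, which gives $U \subseteq W^u(\gamma)$, this yields $Q^{(i)} = f^i(U) \subseteq W^u(\gamma)$ for every $i$, hence $Z^{(r)} \subseteq W^u(\gamma)$ for every $r$. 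So if (ii) holds then in particular $x \in Z^{(N)} \subseteq W^u(\gamma)$.

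For (i)$\Rightarrow$(ii) I would fix a backward orbit $(x_{-i})_{i\ge 0}$ with $x_0 = x$, $f(x_{-i}) = x_{-(i-1)}$, and $x_{-i} \to \gamma$, and argue in three steps. \emph{Step 1 (tracking which point of $\gamma$ is approached).} Choosing $\delta > 0$ so small that the balls about $y, f(y), \ldots, f^{p-1}(y)$ are pairwise disjoint and miss $\Sigma$, each $x_{-i}$ (for $i$ large) lies near a unique $f^{k_i}(y)$; since $f(f^{k}(y)) = f^{k+1}(y)$ and $f$ is continuous there, $k_{i-1} \equiv k_i + 1 \pmod p$, so $k_i \equiv c - i \pmod p$ for some fixed $c$, and $x_{-i}$ approaches $y$ itself exactly along $i \equiv c \pmod p$. \emph{Step 2 (these preimages lie on $H$).} For $i \equiv c \pmod p$ and large, the backward $f^p$-orbit $x_{-i}, x_{-i-p}, \ldots$ stays near $y$ and converges to $y$; near $y$ the map $f^p$ is the affine map $x \mapsto M_\cX x + P_\cX b$ fixing $y$, which by hyperbolicity has no unit-modulus multipliers, so under backward iteration any nonzero stable component of $x_{-i} - y$ would grow without bound. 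Hence that component vanishes and, by the coincidence of the local unstable manifold with the translated unstable subspace noted in \S\ref{sec:sym}, $x_{-i} \in H$. \emph{Step 3 (capture by $U$).} As $U$ contains $y$ in its relative interior it contains a relative neighbourhood of $y$ in $H$, so the convergence $x_{-i} \to y$ along $i \equiv c \pmod p$ gives an index $I$ with $x_{-i} \in U$, and therefore $x = f^i(x_{-i}) \in Q^{(i)}$, for all such $i \ge I$.

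To finish, I would note that every block $\{r, r+1, \ldots, r+p-1\}$ with $r \ge I$ contains an index $i^* \equiv c \pmod p$ satisfying $i^* \ge r \ge I$, so $x \in Q^{(i^*)} \subseteq Z^{(r)}$; taking $N = I$ then establishes (ii).

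I expect Step 2 to be the crux. The difficulty is that a point of $W^u(\gamma)$ sitting near $y$ need not lie on $H$ --- other sheets of the manifold may pass nearby --- so one cannot simply read off membership from proximity. What rescues the argument is that the preimages $x_{-i}$ come equipped with an entire backward orbit that remains close to $\gamma$; hyperbolicity then forces the stable component to vanish, pinning the point onto $H$. The residue bookkeeping in Step 1 is the supporting technical point, ensuring the captured preimages accumulate at $y$ and hence on the particular branch that $U$ initialises, rather than at some other point of $\gamma$.
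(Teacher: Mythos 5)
Your proof is correct, and the reverse implication (ii)$\Rightarrow$(i) matches the paper's argument in all but phrasing. For the forward implication you take a genuinely different route. The paper first constructs an auxiliary polytope $V \subset U$ with $y$ in its relative interior satisfying $V \subset f^{\ell p}(U)$ for all $\ell \ge 0$ (using that $f^p|_H$ is expanding near $y$, so no boundary point of $U \cap \cN$ converges to $y$), whence $V \subset Z^{(r)}$ for every $r$; it then only needs the backward orbit of $x$ to hit $V$ \emph{once}, at some $w$ with $x = f^N(w)$. You dispense with $V$ entirely and instead show the backward orbit enters $U$ itself at \emph{every} sufficiently large index in the residue class $i \equiv c \pmod p$, which is what convergence to $y$ along $H$ delivers for free; both arguments then finish with the same observation that any window of $p$ consecutive indices meets the relevant arithmetic progression. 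What your version buys is that the delicate point --- why the backward orbit is eventually captured by a set containing $y$ in its relative interior in $H$ --- is spelled out in full: your Step 1 (residue bookkeeping forcing accumulation at $y$ rather than at another point of $\gamma$) and Step 2 (hyperbolicity of $M_\cX$ killing the stable component of $x_{-i}-y$, valid even without invertibility of $M_\cX$ since $z_0^s = M_\cX^\ell z_\ell^s \to 0$) make rigorous what the paper compresses into the single sentence ``this sequence must contain a point $w \in V$.'' What the paper's $V$ buys in exchange is a cleaner endgame (one captured point suffices) and the reusable fact $V \subset Z^{(r)}$ for all $r \ge 0$. One cosmetic remark: your appeal in Step 2 to the coincidence of the local unstable manifold with $H$ is unnecessary, since your computation already places $x_{-i}-y$ in the unstable subspace directly.
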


\begin{proof}
In a neighbourhood $\cN$ of $y$ where $f^p$ is affine, $H$ is locally invariant under $f^p$.
Since $U \subset H$ has $y$ in its relative interior
and $H$ contains only unstable directions,
no point on the boundary of $U \cap \cN$ (taken with respect to $H$) converges to $y$ in $\cN$ under $f^p$.
Thus there exists a $d$-dimensional subset $V \subset U$ with $y$ in its relative interior
such that $V \subset f^{\ell p}(U)$ for all $\ell \ge 0$.
Hence $V \subset Q^{(\ell p)}$ for all $\ell \ge 0$,
so $V \subset Z^{(r)}$ for all $r \ge 0$.

Now suppose $x \in W^u(\gamma)$.
Then $x$ has a sequence of preimages under $f$ that converges to $\gamma$.
Since $V \subset H$ has $y$ in its relative interior and contains all unstable directions,
this sequence must contain a point $w \in V$.
Let $N \ge 0$ be such that $x = f^N(w)$.
Then $w \in Z^{(r)}$ for all $r \ge 0$,
thus $x \in Z^{(r)}$ for all $r \ge N$.

Conversely suppose there exists $N \ge 0$ such that $x \in Z^{(r)}$ for all $r \ge N$.
Let $i \in \{ 0,1,\ldots,p-1 \}$ be such that $x \in Q^{(N+i)}$,
and $w \in U$ be such that $f^{N+i}(w) = x$.
Then $w \in W^u(\gamma)$ by Lemma \ref{le:U}, thus $x \in W^u(\gamma)$ as required.
\end{proof}

\section{Two-dimensional implementation and examples}
\label{sec:examples}

In this section we first give details of
the implementation of the method for two-dimensional unstable manifolds,
then describe three examples.

To initialise the method we need a suitable set $U$.
To construct $U$ we first evaluate $y = \left( I - M_{\cX} \right)^{-1} P_{\cX} b$ (the unique fixed point of \eqref{eq:fp}),
and use the eigenvectors of $M_{\cX}$ to identify a basis $\{ u_1, u_2 \}$ of the unstable subspace.
Any point in $H$ can then be written as $y + k_1 u_1 + k_2 u_2$, for some $k_1, k_2 \in \bbbr$.

Recall $H$ is invariant under \eqref{eq:fp}.
Thus the restriction of \eqref{eq:fp} to $H$ can be expressed as an invertible
two-dimensional map on the values of $k_1$ and $k_2$.
This map, call it $g$, will be needed in a moment.

Let $K \ge 3$ be the number of vertices we want $U$ to have.
To ensure $y$ lies in the relative interior of $U$,
we define $K$ pairs of points $(k_1,k_2)$ equispaced on a circle in $\bbbr^2$ centred at the origin,
and use these to construct $K$ points $y + k_1 u_1 + k_2 u_2$ to be the vertices of $U$.
These points will satisfy the admissibility condition \eqref{eq:backwardsAdmissibility}
if the radius of the circle is sufficiently small.
Numerically \eqref{eq:backwardsAdmissibility} can be verified for all $i = 1,2,\ldots,1000$, say
(in which case it is almost certainly true for all $i \ge 1$), by explicitly computing preimages.
To compute the preimages it is important to use $g^{-1}$
because $\gamma$ is a saddle thus numerical (round-off) error
will mean that preimages of $f$ will not converge to $\gamma$.

Once an appropriate set $U$ has been identified,
we store $P^{(0)}_1 = U$ as a list of vertices, and also call this $Q^{(0)}$.
By \eqref{eq:Qi}, each $Q^{(i)}$ is a union of $m_i$ convex polygons $P^{(i)}_j$, each of which is stored as a list of vertices.
To compute each $Q^{(i+1)}$ we perform the following steps to each $P^{(i)}_j$.
First we evaluate $c^{\sf T} v$ at every vertex $v$ of $P^{(i)}_j$.
If no two of these values have different signs then $P^{(i)}_j$ does not cross $\Sigma$ by convexity.
Otherwise $P^{(i)}_j$ does cross $\Sigma$, in which case its edges intersect $\Sigma$ at exactly two points.
It is a simple coding exercise to compute these points then use them and the vertices of $P^{(i)}_j$
to form two convex polygons that do not cross $\Sigma$ and whose union is $P^{(i)}_j$.
In either case we then map every vertex under $f$
to create one or two polygons in $Q^{(i+1)}$ (this step is justified by Lemma \ref{le:imageOfPolytope}).

We now show computed manifolds of the three-dimensional border-collision normal form.
This is the map \eqref{eq:f} on $\bbbr^3$ with
\begin{equation}
A_L = \left[ \begin{array}{c@{\myMedSep}c@{\myMedSep}c}
\tau_L & 1 & 0 \\
-\sigma_L & 0 & 1 \\
\delta_L & 0 & 0
\end{array} \right], \quad
A_R = \left[ \begin{array}{c@{\myMedSep}c@{\myMedSep}c}
\tau_R & 1 & 0 \\
-\sigma_R & 0 & 1 \\
\delta_R & 0 & 0
\end{array} \right], \quad
b = \left[ \begin{array}{c}
1 \\
0 \\
0
\end{array} \right], \quad
c = \left[ \begin{array}{c}
1 \\
0 \\
0
\end{array} \right],
\label{eq:nf}
\end{equation}
and is a normal form in the sense that any three-dimensional map of the form \eqref{eq:f}
that satisfies a certain genericity condition (observability)
can be transformed to the normal form under an affine change of coordinates \cite{Di03}.
Studies of bifurcations in the three-dimensional normal form include \cite{DeDu11,MuBa23,Pa18,Si17c}.

\begin{figure}[b!]
\begin{center}
\includegraphics[width=12.2cm]{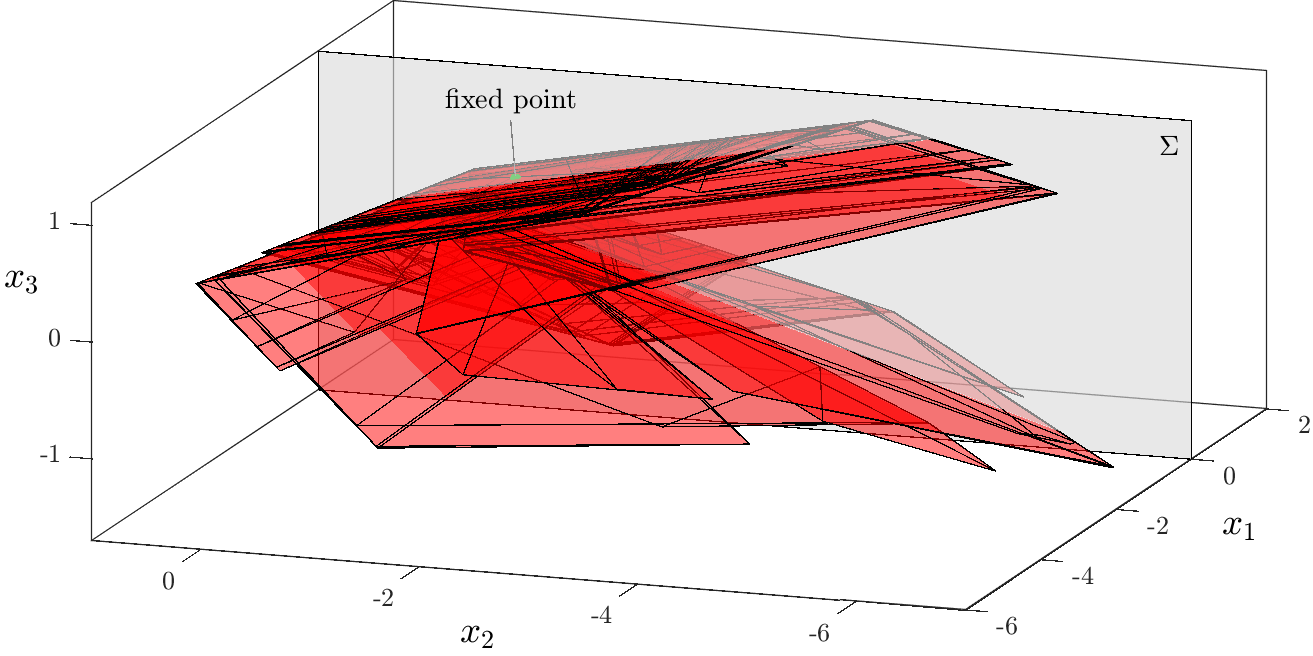}
\caption{
A plot of the phase space of the three-dimensional border-collsion normal form with parameter values \eqref{eq:A}
showing a two-dimensional unstable manifold whose closure appears to be a chaotic attractor.
This is an enlarged view of the bottom right plot of Fig.~\ref{fig:twoDimInvManExA_many}.
\label{fig:twoDimInvManExA}
} 
\end{center}
\end{figure}

First we consider the normal form with parameter values
\begin{equation}
\begin{array}{r@{\myEquals}l@{\myBigSep}r@{\myEquals}l}
\tau_L & 0, & \tau_R & 0, \\
\sigma_L & -1, & \sigma_R & 3, \\
\delta_L & 0.3, & \delta_R & 0.6.
\end{array}
\label{eq:A}
\end{equation}
These values were obtained by starting with Figure 1 of \cite{Gl16e} for an instance of the two-dimensional normal form
(which corresponds to $\delta_L = \delta_R = 0$ in \eqref{eq:nf}) for which the map has a two-dimensional attractor,
and altering the parameter values slightly to create fully three-dimensional dynamics.
Numerical simulations (not shown) suggest that with \eqref{eq:A} the normal form
has a chaotic	attractor that is equal to the closure of the unstable manifold of a fixed point.
This manifold is two-dimensional and its computation is shown in Fig.~\ref{fig:twoDimInvManExA}
using $i = 12$ iterations of the quadrilateral $U$ shown in Fig.~\ref{fig:twoDimInvManExA_many}.
The computation gives $f^{12}(U)$ as the union of $533$ polygons.
These are plotted as semi-transparent surfaces
and provides some tangible impression of the presumably fractal geometry of the attractor.

Next we use the values
\begin{equation}
\begin{array}{r@{\myEquals}l@{\myBigSep}r@{\myEquals}l}
\tau_L & 1.5, & \tau_R & 0, \\
\sigma_L & 0, & \sigma_R & 1.5, \\
\delta_L & 0.5, & \delta_R & 0.5,
\end{array}
\label{eq:B}
\end{equation}
obtained by altering values used in Figure 8 of \cite{GlSi22b}.
With \eqref{eq:B} the normal form has a saddle fixed point with a one-dimensional unstable manifold and a two-dimensional stable manifold.
From their numerical computation, Fig.~\ref{fig:twoDimInvManExB},
we find that these manifolds nearly intersect at points far from the fixed point.
For instance, near the middle of the figure the stable manifold (blue) has `spikes'
close to the unstable manifold (red).
This suggests that the values \eqref{eq:B} are close to where the stable and unstable manifolds first intersect
(indeed with instead $\tau_L = 1.51$ the right branch of the unstable manifold diverges).
Such an intersection is a piecewise-linear analogue of a first homoclinic tangency.
Numerical simulations suggest that an attractor
is destroyed when the parameter values are perturbed to create such an intersection.

\begin{figure}[b!]
\begin{center}
\includegraphics[width=12.2cm]{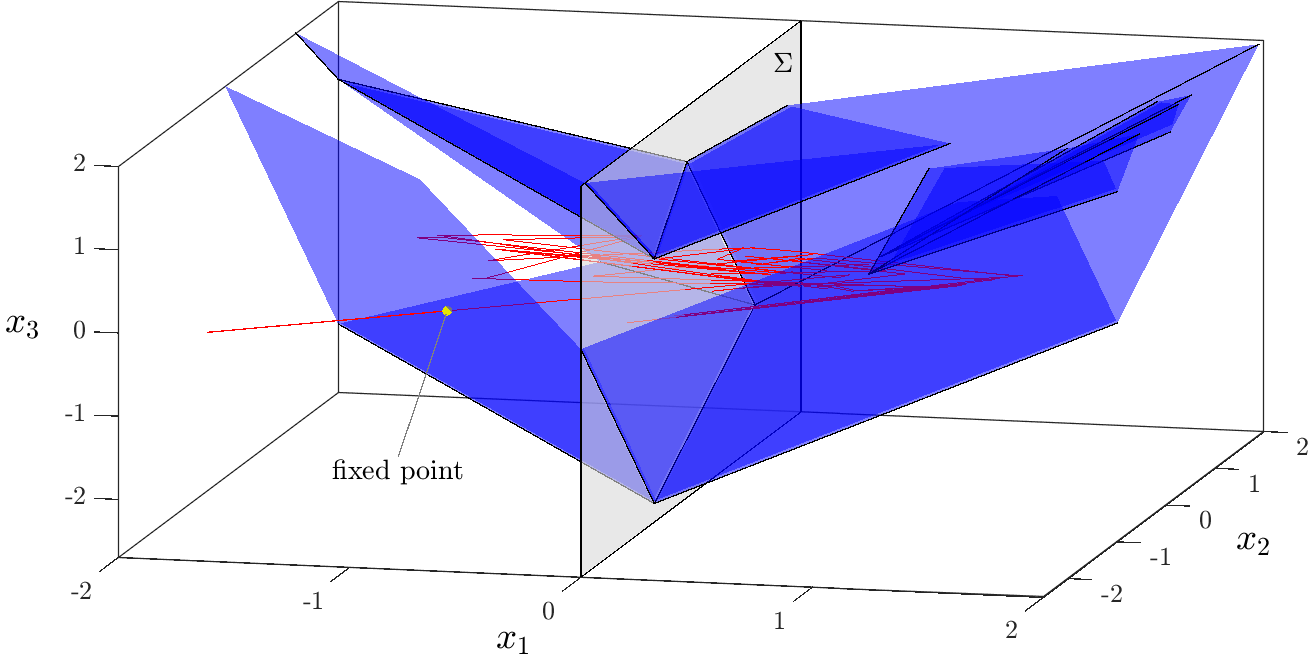}
\caption{
The two-dimensional stable (blue) and one-dimensional unstable (red)
manifolds of a fixed point of the three-dimensional border-collision normal form with parameter values \eqref{eq:B}.
The stable manifold appears to form the boundary of the basin of attraction of a chaotic attractor
that, when parameters are varied (e.g.~$\tau_L$ is increased slightly),
is destroyed due to the stable and unstable manifolds developing non-trivial intersections.
\label{fig:twoDimInvManExB}
} 
\end{center}
\end{figure}

\begin{figure}[b!]
\begin{center}
\includegraphics[width=12.2cm]{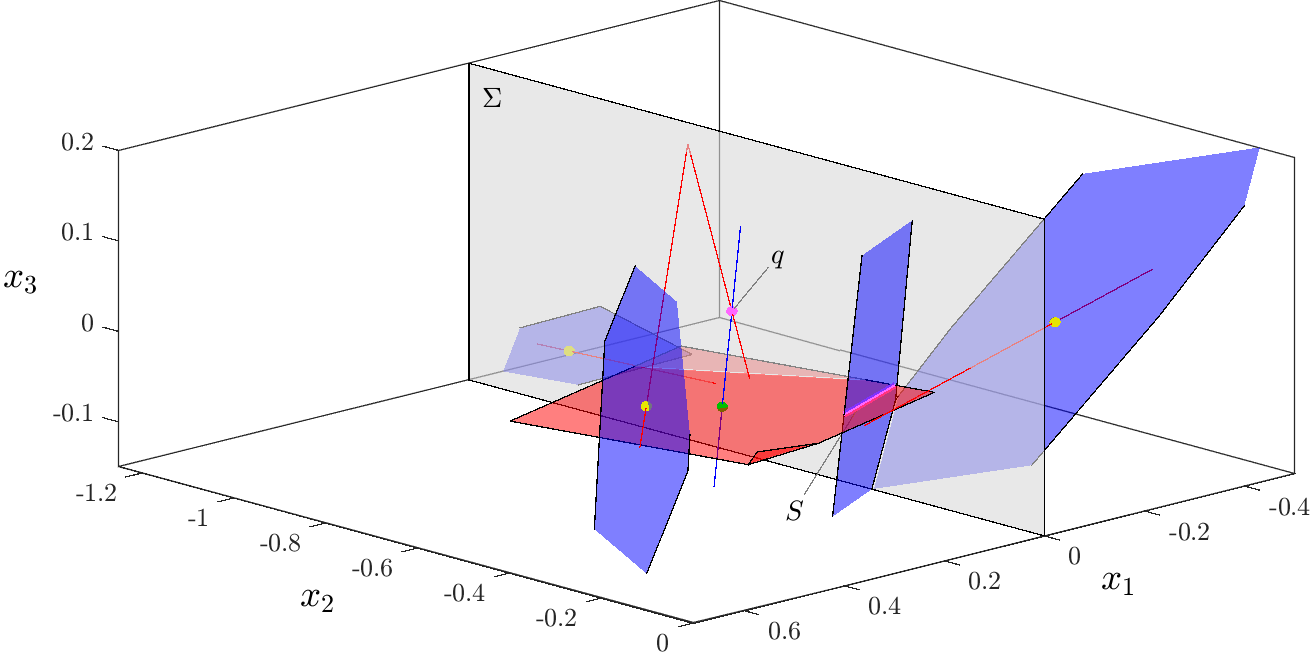}
\caption{
A plot of the phase space of the three-dimensional border-collsion normal form with parameter values \eqref{eq:C}
showing the stable (blue) and unstable (red) manifolds of a fixed point and $LLR$-cycle.
Their respective stable and unstable manifolds intersect at $q$ and along $S$,
hence these sets have a heterodimensional-cycle.
\label{fig:twoDimInvManExC}
} 
\end{center}
\end{figure}

Finally we consider the values
\begin{equation}
\begin{array}{r@{\myEquals}l@{\myBigSep}r@{\myEquals}l}
\tau_L & 0.7228540306, & \tau_R & -1.5, \\
\sigma_L & -1, & \sigma_R & 2, \\
\delta_L & -0.2, & \delta_R & -0.2,
\end{array}
\label{eq:C}
\end{equation}
used for Figure 6 of \cite{GlSi23b}.
Here the normal form has a saddle fixed point (the green point in Fig.~\ref{fig:twoDimInvManExC}) with unstable index two,
and a saddle $LLR$-cycle (the yellow points in Fig.~\ref{fig:twoDimInvManExC}) with unstable index one.
The value of $\tau_L$ was chosen in \cite{GlSi23b} (accurate to ten decimal places) so that
the one-dimensional stable manifold of the fixed point intersects the one-dimensional unstable manifold of the $LLR$-cycle
(e.g.~at the point $q$).
The two-dimensional manifolds were not computed in \cite{GlSi23b};
they have been computed here and found to intersect (e.g.~along the line segment $S$).
For clarity the manifolds have only been grown as far as needed to identify these intersections.

Together the intersections show that there exists a heteroclinic connection from the fixed point to the $LLR$-cycle,
and another connection from the $LLR$-cycle back to the fixed point.
Significantly, the fixed point and $LLR$-cycle have different unstable indices.
Consequently the connections form a {\em heterodimensional cycle};
such cycles are well known for generating non-hyperbolic dynamics \cite{BoDi05}.

\section{Discussion}
\label{sec:conc}

This paper has introduced a method for efficiently computing
multi-dimensional stable and unstable manifolds of piecewise-linear maps.
The method simply iterates polytopes, and it remains to see if
variations on this approach can produce improved performance.
For instance, the method does not explicitly manage cases
where the dynamics on the manifold expands relatively strongly in one direction
that would cause the computation to produce a subset of the manifold that is stretched greatly in one direction.
For stable and unstable manifolds of equilibria of ordinary differential equations this is a common difficulty
that can be dealt with, for instance, by evolving the boundary of the computed subset
so that at each step all points on the boundary have the same geodesic distance
from the equilibrium \cite{KrOs03}.
For polytopes we could impose a similar constraint,
e.g.~that the geodesic distance of all vertices on the boundary of the subset
varies by at most, say, $50\%$.
Also it remains to modify the method to compute stable manifolds when \eqref{eq:f} is non-invertible
by computing all preimages of the subset at each step.

\section*{Acknowledgements}

The author thanks the organisers of ICDEA 2023 where many of the ideas were developed,
and encouragement from Soumitro Banerjee.
This work was supported by Marsden Fund contract MAU2209 managed by Royal Society Te Ap\={a}rangi.

\appendix

\section{Proof of Lemma \ref{le:U}}
\label{app:app}

Let $v^{(1)}, v^{(2)}, \ldots v^{(K)}$ denote the vertices of $U$.
For each $k \in \{ 1,2,\ldots,K \}$ let $v^{(k)}_0 = v^{(k)}$ and let $\left\{ v^{(k)}_i \right\}_{i=1}^\infty$
be a sequence of preimages of $v^{(k)}$ satisfying \eqref{eq:backwardsAdmissibility}.
That is, for all $i \ge 1$,
$$
f \left( v^{(k)}_i \right) = A^{(i)} v^{(k)}_i + b = v^{(k)}_{i-1} \,,
$$
where $A^{(i)} = A_{\cX_{-i \,{\rm mod}\, p}}$.

Choose any $x \in U$ and form the convex combination $x = \sum_{k=1}^K \lambda_k v^{(k)}$.
Let $x_0 = x$ and for each $i \ge 1$ define
$$
x_i = \sum_{k=1}^K \lambda_k v^{(k)}_i.
$$
Notice $x_i \to \gamma$ because $v^{(k)}_i \to \gamma$ for each $k$.
To complete the proof it remains to show that $\{ x_i \}_{i=1}^\infty$ is a sequence of preimages of $x$,
i.e.~$f(x_i) = x_{i-1}$ for all $i \ge 1$.

Choose any $i \ge 1$.
By convexity, $c^{\sf T} x_i = \sum_{k=1}^K \lambda_k c^{\sf T} v^{(k)}_i$ has same sign as each $c^{\sf T} v^{(k)}_i$.
Thus
\begin{eqnarray*}
f(x_i)
&=& A^{(i)} x_i + b
= A^{(i)} \left( \sum_{k=1}^K \lambda_k v^{(k)}_i \right) + b \\
&=& \lambda_k \sum_{k=1}^K \left( A^{(i)} v^{(k)}_i + b \right)
= \sum_{k=1}^K \lambda_k v^{(k)}_{i-1}
= x_{i-1} \,,
\end{eqnarray*}
as required. \hfill \qed



\end{document}